\allowdisplaybreaks \numberwithin{equation}{section}
\numberwithin{equation}{section}
\newtheorem{theorem}{Theorem}[section]
\newtheorem{proposition}[theorem]{Proposition}
\theoremstyle{definition}
\theoremstyle{remark}
\newtheorem{example}[theorem]{Example}
\newcommand{\mv}{\mathbf{v}}
\begin{document}
\title[A rigidity result for the Euler equations in an annulus]{A rigidity result for the Euler equations in an annulus}
	
	\author{Yuchen Wang, Weicheng Zhan}
	
\address{School of Mathematical Science
Tianjin Normal University, Tianjin, 300074,
P.R. China}
\email{wangyuchen@mail.nankai.edu.cn}

\address{School of Mathematical Sciences, Xiamen University, Xiamen, Fujian, 361005, P.R. China}
\email{zhanweicheng@amss.ac.cn}


	\begin{abstract}
We are concerned with rigidity properties of steady Euler flows in two-dimensional bounded annuli. We prove that in an annulus, a steady flow with no interior stagnation point and tangential boundary conditions is a circular flow, which addresses an open question proposed by F. Hamel and N. Nadirashvili in [J. Eur. Math. Soc., 25 (2023), no. 1, 323-368]. The proof is based on the study of the geometric properties of the streamlines of
the flow and on `local' symmetry properties for the non-negative solutions of semi-linear elliptic equations with a continuous nonlinearity.
	\end{abstract}
	
	\maketitle{\small{\bf Keywords:} The Euler equation, Circular flows, Rigidity property, Semi-linear elliptic equation. \\

	
	\section{Introduction and Main result}
Let $D \subset \mathbb{R}^2$ be a domain with a smooth boundary. Consider stationary solutions of the Euler equations for an ideal fluid:
	\begin{align}\label{1-1}
		\begin{cases}
			\mathbf{v}\cdot \nabla \mathbf{v} =-\nabla P&\text{in}\ \, D,\\
			\nabla\cdot\mathbf{v}=0\,\ \, \ \ \ \ \ \ \  \ \, &\text{in}\ \, D,
\end{cases}
	\end{align}
where $\mathbf{v}=(v_1, v_2)$ is the velocity field and $P$ is the scalar pressure. Here the solutions $\mathbf{v}$ and $P$ are always understood in the classical sense, that is, they are (at least) of class $C^1$ in $D$, and therefore satisfy \eqref{1-1} everywhere in $D$. We also assume the tangential boundary condition, namely, $\mathbf{v}$ is (at least) continuous up to the boundary and tangential there:
\begin{equation}\label{1-2}
  \mathbf{v}\cdot\mathbf{n}=0\ \ \ \text{on}\ \, \partial D,
\end{equation}
where $\mathbf{n}$ is the outward unit normal on $\partial D$.

We are interested in rigidity properties of steady solutions of the Euler equations. Specifically, we want to address the following fundamental question pertaining to steady configurations
of fluid motion:

\smallskip
\emph{Given a domain $D$ with symmetry, to what extent must steady fluid states $\mathbf{v}$ inherit the geometric symmetry properties of the domain\,?}
\smallskip

In recent years, significant progress has been made in this regard. Throughout the paper, the stagnation points of a flow $\mathbf{v}$ are the points $x$ for which $|\mv(x)|=0$. In \cite{HN3, HN2}, Hamel and Nadirashvili proved that in strips, half planes or the plane, a bounded steady flow with no stagnation point is a parallel shear flow. The same conclusion holds for a steady flow in a periodic channel; see \cite{Cons}. We refer the reader to \cite{ Chae, Cons, Coti, Gom, HN3, HN2, HN1, HN, Nad, Ruiz} for some other relevant results in this aspect.

We say that a flow $\mathbf{v}$ is a circular flow if $\mathbf{v}(x)$ is parallel to the vector $\mathbf{e}_\theta(x)=\left(-x_2/|x|, x_1/|x|\right)$ at every point $x\in D\backslash\{0\}$. Very recently, Hamel and Nadirashvili \cite{HN3} showed that in a bounded annulus, a steady flow with no stagnation point is a circular flow, namely the streamlines are concentric circles. For $0<a<b<+\infty$, let $\Omega_{a, b}=\{x\in \mathbb{R}^2: a<|x|<b\}$, $C_a=\{x\in \mathbb{R}^2: |x|=a\}$ and $C_b=\{x\in \mathbb{R}^2: |x|=b\}$.
\begin{theorem}[\cite{HN3}, Theorem 1.2]
  Let $\mv$ be a $C^2(\overline{\Omega_{a, b}})$ flow solving \eqref{1-1}-\eqref{1-2} with $D=\Omega_{a, b}$ and such that
  \begin{equation}\label{1-3}
    \left\{x\in \overline{\Omega_{a, b}}: |\mv(x)|=0 \right\}\subsetneq C_a\ \ \ \text{or}\ \ \     \left\{x\in \overline{\Omega_{a, b}}: |\mv(x)|=0 \right\}\subsetneq C_b.
  \end{equation}
  Then $|\mv|>0$ in $\overline{\Omega_{a, b}}$ and $\mv$ is a circular flow, and there is a $C^2([a, b])$ function $V$ with constant strict sign such that
  \begin{equation*}
    \mv(x)=V(|x|)\mathbf{e}_\theta(x)\ \ \ \text{for all }\, x\in \overline{\Omega_{a, b}}.
  \end{equation*}
\end{theorem}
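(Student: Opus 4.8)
\medskip
\noindent\textbf{Proof proposal.} The plan is to pass to the stream function, reduce to a semilinear elliptic equation, read off the topology of the streamlines from that of the annulus, and finally exploit the flatness of the plane together with the closedness of the streamlines to force radial symmetry. Since $\mv$ is divergence-free in $\Omega_{a,b}$ and tangent to $\partial\Omega_{a,b}$, its flux across any circle encircling the hole equals the vanishing flux across $C_a$, so $\mv$ admits a single-valued stream function $u\in C^3(\overline{\Omega_{a,b}})$ with $\mv=\nabla^\perp u$; the two boundary circles are streamlines, hence level sets, $u\equiv\alpha$ on $C_a$ and $u\equiv\beta$ on $C_b$, and the stagnation points of $\mv$ are exactly the critical points of $u$. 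Assumption \eqref{1-3} (WLOG its first alternative) then reads $\nabla u\neq0$ on $\overline{\Omega_{a,b}}\setminus C_a$. Taking the curl of the momentum equation gives $\mv\cdot\nabla\omega=0$ with $\omega=\Delta u$, so the vorticity is constant along streamlines, and since $\nabla u\neq0$ off $C_a$ one obtains $\Delta u=f(u)$ in $\Omega_{a,b}$ with $f$ continuous on the range of $u$ and of class $C^1$ away from $\alpha$.

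I would then fix the geometry of the level sets via the elementary observation that a function with no interior critical point which is constant on the boundary of a subdomain with nonempty interior is constant there (otherwise it has a strict interior extremum). Applied to the whole annulus this forces $\alpha\neq\beta$, say $\alpha<\beta$, with $\alpha<u<\beta$ in $\Omega_{a,b}$ (and in particular $\mv\not\equiv0$, which is consistent with \eqref{1-3}); applied to the disks and sub-annuli cut off by components of a level set $\{u=c\}$ with $c\in(\alpha,\beta)$, it shows each such level set is a single Jordan curve $\gamma_c\subset\Omega_{a,b}$ encircling the hole. Hence $\overline{\Omega_{a,b}}$ is foliated by the nested curves $\{\gamma_c\}_{c\in[\alpha,\beta]}$, with $\gamma_\alpha=C_a$, $\gamma_\beta=C_b$, and $|\mv|=|\nabla u|>0$ on $\Omega_{a,b}\cup C_b$.

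The crux --- and the step I expect to be the main obstacle --- is to upgrade this to a \emph{radial} foliation. The cleanest target is to prove that $|\nabla u|$ is constant on each $\gamma_c$, i.e.\ $|\nabla u|=Q(u)$ for some continuous $Q>0$; equivalently, by Bernoulli's law, that the pressure is constant along streamlines. This is strictly more than the maximum principle on a fixed half-annulus provides --- the first Dirichlet eigenvalue of the linearized operator $\Delta-f'(u)$ there need not have the right sign, so the naive rotating-plane argument stalls --- and it is exactly where the streamline method of Hamel and Nadirashvili enters: parametrizing $\Omega_{a,b}$ by $(c,\sigma)=(\text{value of }u,\ \text{arclength along }\gamma_c)$, one plays off the transport/Bernoulli identities and the coarea relation $\frac{d}{dc}\oint_{\gamma_c}|\nabla u|\,d\sigma=f(c)\oint_{\gamma_c}|\nabla u|^{-1}d\sigma$ against the length $\ell(c)=\oint_{\gamma_c}d\sigma$, using the periodicity in $\sigma$ forced by the closedness of the streamlines to exclude any defect in the Cauchy--Schwarz inequality $\ell(c)^2\le\big(\oint_{\gamma_c}|\nabla u|\,d\sigma\big)\big(\oint_{\gamma_c}|\nabla u|^{-1}d\sigma\big)$; equality there is precisely the desired statement. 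At the level of the stream function this amounts to showing $u_\theta\equiv0$, where the angular derivative $u_\theta=x_1\partial_2u-x_2\partial_1u$ solves $\Delta u_\theta=f'(u)u_\theta$ in $\Omega_{a,b}$ with zero boundary data and lies in the kernel of $\Delta-f'(u)$ --- so it must be killed by the geometry of the foliation rather than by an invertibility statement.

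Granting $|\nabla u|=Q(u)$, the conclusion follows quickly. A one-line computation from $D^2u\,\nabla u=\tfrac12\nabla|\nabla u|^2$ and $|\nabla u|^2=Q(u)^2$ shows $(\mathbf{N}\cdot\nabla)\mathbf{N}=0$ for $\mathbf{N}=\nabla u/|\nabla u|$, so the gradient flow lines of $u$ are straight segments; the one issuing from $x_0\in C_a$ is normal to $C_a$ there, hence lies on the ray through $0$ and $x_0$ and reaches $C_b$ at $b\,x_0/|x_0|$, with $u$ on it solving $u'=Q(u)$ from the value $\alpha$. Since every point of $\overline{\Omega_{a,b}}$ sits on exactly one such radial segment, $u$ depends only on $|x|$, with $u\in C^3([a,b])$, and therefore $\mv=V(|x|)\,\mathbf{e}_\theta$ with $V=\pm u'\in C^2([a,b])$. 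Finally $|\mv|=|V(|x|)|=Q(u)$ is constant on $C_a$ and, by \eqref{1-3}, not identically zero there, hence nowhere zero on $\overline{\Omega_{a,b}}$ --- which also disposes of the last possible stagnation points on $C_a$ --- so $V$ has a constant strict sign, completing the proof.
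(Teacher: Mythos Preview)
This theorem is not proved in the present paper; it is quoted from Hamel--Nadirashvili as background. The paper's own contribution, Theorem~\ref{th}, relaxes \eqref{1-3} to $|\mv|>0$ in the open annulus, and its proof proceeds by an entirely different mechanism from yours: one extends the stream function by its maximum value into the inner disk and invokes Brock's continuous-rearrangement local-symmetry theorem (Proposition~\ref{p-2}) to force radial monotonicity of $u$ directly, with no Bernoulli, coarea, or streamline-integral analysis at all. Combined with the \emph{strict} inclusion in \eqref{1-3}, that argument also recovers the full conclusion of the quoted theorem (including $|\mv|>0$ on the closure and the constant sign of $V$) in one line.

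Your proposal, by contrast, has a genuine gap exactly at the step you flag as the crux. You correctly reduce everything to showing that $|\nabla u|$ is constant along each streamline (equivalently $u_\theta\equiv0$), and the coarea identity $\frac{d}{dc}\oint_{\gamma_c}|\nabla u|\,d\sigma=f(c)\oint_{\gamma_c}|\nabla u|^{-1}\,d\sigma$ and the Cauchy--Schwarz bound $\ell(c)^2\le A(c)B(c)$ are both fine. But you then assert that ``periodicity in $\sigma$'' forces \emph{equality} in Cauchy--Schwarz, and nothing you have written supplies the required reverse inequality; closedness of the streamlines by itself places no constraint on the pointwise variation of $|\nabla u|$ along them. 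In your alternative formulation, $u_\theta$ lies in the kernel of $\Delta-f'(u)$ with zero Dirichlet data, and --- as you yourself acknowledge --- the linearized operator need not be sign-definite, yet you provide no substitute mechanism to kill $u_\theta$. The surrounding steps (stream function, semilinear reduction, Jordan-curve foliation of the level sets, and the clean endgame from $|\nabla u|=Q(u)$ to straight radial gradient lines) are essentially correct and nicely written, but they are scaffolding around a missing keystone. As it stands this is an outline, not a proof.
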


Note that if $\mv$ has no stagnation point in $\overline{\Omega_{a, b}}$, then the condition \eqref{1-3} is fulfilled. In other words, a steady flow having no stagnation point in the \emph{closed} annulus $\overline{\Omega_{a, b}}$ must be a circular flow. Nevertheless, the condition \eqref{1-3} does not seem natural and should be due to technical reasons. The following example escapes the scope of this theorem (see footnote on page 350 of \cite{HN}).

\begin{example}\label{ex1}
  The smooth flow given by
  \begin{equation*}
    \mv(x)=(|x|-a)\mathbf{e}_\theta(x),\ \ \ P(x)=|x|^2/2-2a|x|+a^2\ln|x|
  \end{equation*}
clearly solves \eqref{1-1}-\eqref{1-2} with $D=\Omega_{a, b}$. Notice that $|\mv|=0$ on $C_a$, so it does not satisfy the condition \eqref{1-3}.
\end{example}

With this in mind, Hamel and Nadirashvili proposed the following open question on page 349 of \cite{HN}:

\begin{center}
  \emph{if $|\mv|>0$ in $\Omega_{a, b}$, then $\mv$ is a circular flow\,?}
\end{center}

The aim of this paper is to address this open question. Specifically, we have
\begin{theorem}\label{th}
   Let $\mv$ be a $C^2(\overline{\Omega_{a, b}})$ flow solving \eqref{1-1}-\eqref{1-2} with $D=\Omega_{a, b}$. If $|\mv|>0$ in $\Omega_{a, b}$, then $\mv$ is a circular flow.
\end{theorem}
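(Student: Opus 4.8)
The plan is to work with the stream function. Since $\mv$ is divergence-free in the (doubly-connected) annulus $\Omega_{a,b}$ and tangent to the boundary, there is a $C^3$ stream function $\psi$ with $\mv=\nabla^\perp\psi=(-\partial_{x_2}\psi,\partial_{x_1}\psi)$, constant on each boundary circle; normalize $\psi=0$ on one component. Writing $\omega=\partial_{x_1}v_2-\partial_{x_2}v_1=\Delta\psi$ for the vorticity, the steady Euler equations give that $\nabla^\perp\psi$ and $\nabla\omega$ are parallel wherever $\nabla\psi\neq 0$, i.e. $\omega$ is locally a function of $\psi$ along streamlines; the hypothesis $|\mv|>0$ in $\Omega_{a,b}$ means $\nabla\psi\neq 0$ throughout the open annulus, so $\psi$ has no interior critical point. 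Hence the only possible critical points of $\psi$ in $\overline{\Omega_{a,b}}$ lie on $C_a\cup C_b$. I would first analyze the level sets of $\psi$: with no interior critical points, each connected component of a level set $\{\psi=c\}$ is a $C^1$ curve, and since $\psi$ takes its extreme values on the boundary (maximum principle applied to $\Delta\psi=\omega$ — but here I instead use that a critical-point-free function on the annulus whose boundary values are two constants must have image exactly the interval between them, and each interior level set is a single smooth closed curve separating the two boundary circles, in fact a Jordan curve isotopic to $C_a$).

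The heart of the argument is to upgrade this topological picture to the metric statement that the streamlines are \emph{concentric circles}. Here I would invoke the local symmetry machinery alluded to in the abstract: on each side, near a boundary circle, $\psi$ (or a suitable sign-normalized shift $\psi-c$) is a non-negative solution of a semilinear elliptic equation $\Delta u = f(u)$ with $f$ merely continuous, because along the nested streamlines $\omega=\Delta\psi$ is a well-defined function of the value of $\psi$ (the level sets being connected, the ``local'' relation $\omega=g(\psi)$ is actually global on each annular region). Then one is in the setting of moving-plane / rotating-plane symmetry results for semilinear equations with continuous nonlinearity in an annular region; the tangential boundary condition $\mv\cdot\mathbf n=0$ translates to $\psi$ being constant on $C_a$ and on $C_b$, which is exactly the overdetermined/Dirichlet data needed. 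Because the level sets foliate the whole annulus as disjoint Jordan curves with $\nabla\psi\neq 0$, a result of Serrin/Reichel-type or the Hamel--Nadirashvili ``local symmetry'' lemma forces $\psi$ to be radial, $\psi=\Psi(|x|)$, whence $\mv=\Psi'(|x|)\,\mathbf e_\theta(x)$ is circular.

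More concretely, the steps I would carry out are: (1) construct $\psi\in C^3(\overline{\Omega_{a,b}})$ with $\nabla^\perp\psi=\mv$, note $\nabla\psi\neq 0$ in $\Omega_{a,b}$, and record that $\psi$ is constant ($=\alpha$ and $=\beta$, say) on $C_a$ and $C_b$; WLOG $\alpha<\beta$ (if $\alpha=\beta$ a separate short argument via Hopf lemma on both sides rules this out, or forces a critical point inside, contradiction), so $\alpha\le\psi\le\beta$ in $\overline{\Omega_{a,b}}$ by the maximum principle for $\Delta\psi=\omega$ combined with absence of interior critical points. (2) Show each level set $\Gamma_c=\{\psi=c\}$, $\alpha<c<\beta$, is a single $C^3$ Jordan curve encircling $C_a$, and that $c\mapsto\Gamma_c$ gives a foliation; in particular the ``period/circulation'' integrals are well-defined. (3) Establish the functional relation $\omega=F(\psi)$ in $\Omega_{a,b}$ with $F$ continuous (from $\nabla\omega\parallel\nabla\psi$ plus connectedness of level sets), so $\Delta\psi=F(\psi)$. (4) Apply the rotating-plane method / local-symmetry result for $\Delta\psi=F(\psi)$ with $F$ continuous on the annulus with constant Dirichlet data on both circles and no interior critical point: deduce rotational invariance of $\psi$, i.e. $\psi=\Psi(|x|)$. (5) Conclude $\mv=\Psi'(|x|)\mathbf e_\theta$, a circular flow.

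The main obstacle I anticipate is step (4): standard Gidas--Ni--Nirenberg / Serrin moving-plane arguments require either a Lipschitz nonlinearity or strong structural hypotheses, and here $F$ is only continuous, so strong comparison and Hopf-type lemmas can fail; one must use the refined ``local symmetry'' arguments (in the spirit of Hamel--Nadirashvili's treatment of the strip and plane cases, or of Brock's rearrangement/continuous-Steiner-symmetrization approach) that exploit the specific geometry — nested Jordan-curve level sets with non-vanishing gradient — rather than a naive moving-plane reflection. A secondary technical point is handling the boundary circles $C_a, C_b$, where $|\mv|$ may vanish (as in Example \ref{ex1}): there $\psi$ may have critical points, so the elliptic symmetry result must be applied on the open annulus and the conclusion extended by continuity up to the boundary, and one must check the degenerate case where a stagnation set covers an entire boundary circle does not obstruct the foliation in the interior.
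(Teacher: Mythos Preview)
Your proposal is correct and follows essentially the same approach as the paper: stream function, foliation by Jordan-curve streamlines yielding $\Delta\psi=F(\psi)$ with $F$ merely continuous, then Brock's continuous-rearrangement ``local symmetry'' result combined with the absence of interior critical points to force radiality. The one concrete device the paper adds that you leave implicit is to extend $\psi$ by its constant value $1$ into the inner disk $\overline{B_a}$, so that the extended function lies in $H^1_0(B_b)$ and fits Brock's framework directly; the local-symmetry conclusion then gives a decomposition into radially-decreasing annuli plus a flat set, and $\nabla\psi\neq 0$ in $\Omega_{a,b}$ forces a single annulus.
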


The proof of Theorem \ref{th} will be provided in the next section.

\section{Proof of Theorem \ref{th}}
In this section, we give a proof to Theorem \ref{th}. We will follow the strategy in \cite{HN}. The proof is based on the study of the geometric properties of the streamlines of
the flow and on `local' symmetry properties for the non-negative solutions of semi-linear elliptic equations with a continuous nonlinearity. The proof consists of two main steps. The first step is to derive that the corresponding stream function satisfies an elliptic semi-linear equation, which can be done by studying the geometric properties of the streamlines of the flow. The second step is to establish symmetry properties for the non-negative solutions of the semi-linear elliptic equation, which leads to the desired conclusion.

\smallskip
\noindent \emph{Proof of Theorem \ref{th}:}
The flow $\mathbf{v}$ has a unique (up to additive constants) stream function $u:\overline{\Omega_{a, b}}\to \mathbb{R}$ of class $C^3(\overline{\Omega_{a, b}})$ such that
\begin{equation*}
  \nabla^\perp u=\mathbf{v},\ \ \ \text{that is}\ \ \ \partial_1 u=v_2\ \ \text{and}\ \ \partial_2u=-v_1,
\end{equation*}
in $\overline{\Omega_{a, b}}$, since $\mathbf{v}$ is divergence free and satisfies the tangential boundary condition \eqref{1-2}. The tangency condition $\mathbf{v}\cdot \mathbf{n}=0$ on $\partial \Omega_{a, b}$ also means that $u$ is constant on $C_a$ and constant on $C_b$. Up to normalization, we may assume, without loss of generality, that
\begin{equation*}
  u=1\  \text{on}\ \, C_a\ \ \ \text{and}\ \ \ u=0\ \, \text{on}\  C_b.
\end{equation*}
Since $\mathbf{v}$ has no stagnation point in $\Omega_{a, b}$, the stream function $u$ has no critical point in $\Omega_{a, b}$. It follows that
\begin{equation*}
  0<u(x)<1\ \ \ \text{for all}\ \ x\in \Omega_{a, b}.
\end{equation*}
By Proposition \ref{p-1} below, we see that there is a continuous function $f: [0, 1] \to \mathbb{R}$ such that
  \begin{equation*}
   \Delta u+f(u)=0\ \ \ \text{in}\ \ \Omega_{a, b}.
  \end{equation*}
It remains to show that $u$ is a radially decreasing function with respect to the origin. Set
\begin{equation*}
  \bar{u}(x)=\begin{cases}
               1, & \mbox{if }\ |x|\le a, \\
               u(x), & \mbox{if}\ a<|x|<b.
             \end{cases}
\end{equation*}
Then $\bar{u}\in  H^1_0(B_b)\cap C(\overline{B_b})$, where $B_b=\{x\in \mathbb{R}^2: |x|<b\}$. Moreover, $\bar{u}$ is a weak solution of the following problem
	\begin{align*}
		\begin{cases}
			-\Delta w=f(w),\ \ 0< w < 1&\text{in}\ \ \Omega_{a, b},\\
             w(x)\equiv 1,\ \ &\text{if}\ \ |x|\le a,\\
             w\in H^1_0(B_b).
\end{cases}
	\end{align*}
It follows from Proposition \ref{p-2} below that $\bar u$ is locally symmetric, namely, it is  radially symmetric and radially decreasing in some annuli (probably infinitely many) and flat elsewhere. Recall that $u$ has no critical point in $\Omega_{a, b}$. We conclude that the number of annuli can only be one at most, and hence $u$ is a radially decreasing function. The proof is thus complete.

Below we list two results used in the proof. The first result states that the corresponding stream function of a steady flow satisfies a semi-linear elliptic equation under the assumptions in Theorem \ref{th}.
\begin{proposition}\label{p-1}
  Let $\mathbf{v}$ be as in Theorem \ref{th} and let $u\in C^3(\overline{\Omega_{a, b}})$ be the corresponding stream function, and let $J$ its range defined by
  \begin{equation*}
    J=\{u(x)\mid x\in \overline{\Omega_{a, b}}\}.
  \end{equation*}
  Then there is a continuous function $f: J \to \mathbb{R}$ such that
  \begin{equation*}
   \Delta u+f(u)=0\ \ \ \text{in}\ \ \overline{\Omega_{a, b}}.
  \end{equation*}
\end{proposition}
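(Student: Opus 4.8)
The plan is to combine the transport of vorticity along the flow with the topology of the level sets of the stream function $u$.

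\smallskip
\noindent\emph{Step 1: vorticity transport and the level sets of $u$.} Let $\omega:=\partial_1v_2-\partial_2v_1=\Delta u$ be the vorticity. Taking the scalar curl of $\mv\cdot\nabla\mv=-\nabla P$ and using $\nabla\cdot\mv=0$, one gets $\mv\cdot\nabla\omega=0$ in $\overline{\Omega_{a,b}}$, that is, $\nabla^\perp u\cdot\nabla(\Delta u)=0$ there. Since $u$ has no critical point in $\Omega_{a,b}$ and $0<u<1$ there with $u\equiv1$ on $C_a$, $u\equiv0$ on $C_b$, each level set $\{u=c\}$ with $c\in(0,1)$ is a nonempty compact $C^3$ one-manifold without boundary contained in $\Omega_{a,b}$, hence a finite disjoint union of Jordan curves. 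I would then rule out all but one of these curves using the maximum principle: a null-homotopic component would bound a disk $U\subset\Omega_{a,b}$ with $u\equiv c$ on $\partial U$, forcing an interior critical point of $u$; and two distinct components would bound a sub-annulus $A\subset\Omega_{a,b}$ with $u\equiv c$ on $\partial A$, hence $u\equiv c$ on $A$ — both excluded. So $\{u=c\}$ is a single Jordan curve encircling the hole for each $c\in(0,1)$, while $\{u=1\}=C_a$, $\{u=0\}=C_b$ and $J=[0,1]$.

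\smallskip
\noindent\emph{Step 2: definition of $f$ on $(0,1)$.} As $\nabla u\neq0$ in $\Omega_{a,b}$, the identity $\nabla^\perp u\cdot\nabla(\Delta u)=0$ forces $\nabla(\Delta u)$ to be everywhere parallel to $\nabla u$, so $\Delta u$ is constant along each connected level curve; I define $f(c):=-\Delta u|_{\{u=c\}}$ for $c\in(0,1)$. Continuity of $f$ on $(0,1)$ follows from a compactness argument: if $c_n\to c$, choose $x_n\in\{u=c_n\}$, extract $x_n\to x$, note $u(x)=c$, and conclude $f(c_n)=-\Delta u(x_n)\to-\Delta u(x)=f(c)$. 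Moreover $|f|\le\|\Delta u\|_{C^0(\overline{\Omega_{a,b}})}$, so $f$ is bounded, and $\Delta u+f(u)=0$ in $\Omega_{a,b}$ by construction.

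\smallskip
\noindent\emph{Step 3: the boundary circles — the main obstacle.} What remains, and is the delicate point, is extending $f$ continuously to $\{0,1\}$ despite the fact that $\mv$ may vanish on $C_a$ or on $C_b$ (as in Example~\ref{ex1}). My plan is as follows. Fix $x_0\in C_a$ and consider the inward radial segment $\gamma_0(t)=(a+t)\,x_0/|x_0|$, $t\in[0,\delta]$ with $0<\delta<b-a$, so that $u(\gamma_0(0))=1$ and $u(\gamma_0(t))<1$ for $t\in(0,\delta]$. For $c<1$ close to $1$, set $\tau_c:=\min\{t\in(0,\delta):u(\gamma_0(t))=c\}$; this is well defined, and $\tau_c\to0$ as $c\to1^-$, since otherwise $u$ would be $\ge1$ on a subinterval of the segment. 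Because $\gamma_0(\tau_c)\in\{u=c\}$ and $\Delta u$ is continuous, $-f(c)=\Delta u(\gamma_0(\tau_c))\to\Delta u(x_0)$ as $c\to1^-$. Since the left-hand side does not depend on $x_0$, this limit shows at once that $\Delta u$ is constant on $C_a$; I set $f(1):=-\Delta u|_{C_a}$, and argue in the same way on $C_b$. Then $f\in C([0,1])=C(J)$, and $\Delta u+f(u)=0$ holds on all of $\overline{\Omega_{a,b}}$ — in $\Omega_{a,b}$ by Step 2, and on $C_a\cup C_b$ because $\Delta u$ equals the corresponding constant there. I expect Step 3 to be the real content: Steps 1--2 are the standard vorticity identity together with soft properties of level sets of functions without critical points, whereas forcing $\Delta u$ to be constant on the circles $C_a$, $C_b$ — precisely the circles on which stagnation is allowed — is what makes $f$ continuous up to the endpoints of $J$.
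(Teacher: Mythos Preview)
Your argument is correct and complete, but it follows a genuinely different route from the paper. The paper fixes a point $y\in\Omega_{a,b}$, runs the \emph{gradient flow} $\dot\sigma_y=\nabla u(\sigma_y)$, and invokes Lemmas~2.2 and~2.6 of \cite{HN} to know that $\sigma_y$ crosses every level of $u$ exactly once and that each streamline $\Phi_x$ of $\mv$ is a Jordan curve meeting $\sigma_y$; the function $f$ is then defined by $f(\tau)=-\Delta u(\sigma_y(g^{-1}(\tau)))$ with $g=u\circ\sigma_y$, and is transported to all of $\Omega_{a,b}$ via the streamlines. For the endpoints the paper argues that the streamlines $\Phi_x$ collapse onto $C_a$ (resp.\ $C_b$) as $|x|\to a$ (resp.\ $b$), and since $\Delta u$ is constant on each $\Phi_x$ and uniformly continuous on $\overline{\Omega_{a,b}}$, it must be constant on each boundary circle. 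By contrast, you bypass both auxiliary flows entirely: connectedness of the level sets comes from a direct maximum-principle argument, $f$ is defined level-by-level, and continuity at the endpoints is obtained from a radial probe whose first hitting time of $\{u=c\}$ tends to zero. Your approach is more self-contained (no appeal to the lemmas of \cite{HN}), whereas the paper's gradient-flow parametrization yields the extra information $f\in C^1((0,1))$ via the chain rule, which your compactness argument in Step~2 does not immediately give --- though that additional regularity is not used in the sequel.
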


\begin{proof}
This result has been pointed out in \cite{HN}. For the sake of completeness, here we give a detailed proof. Based on the discussion above, without loss of generality, we may assume that
\begin{equation}\label{4-1}
  u=1\  \text{on}\ \, C_a,\ \ \ u=0\ \, \text{on}\  C_b\ \ \ \text{and}\ \ \ 0<u<1\ \, \text{in}\ \Omega_{a, b}.
\end{equation}
Consider any point $y\in \Omega_{a, b}$. Let $\sigma_y$ be the solution of
	\begin{align}\label{3-10}
		\begin{cases}
			 \dot{\sigma}_y(t) = \nabla u(\sigma_y(t)), &\\
             \sigma_y(0)=y.\ \ &
\end{cases}
	\end{align}
Then by Lemma 2.2 in \cite{HN}, there are some quantities $t_y^{\pm}$ such that $-\infty\le t^-_y<0<t^+_y\le +\infty$ and the solution $\sigma_y$ of \eqref{3-10} is of class $C^1((t_y^-, t^+_y))$ and ranges in $\Omega_{a, b}$, with
	\begin{align}\label{3-11}
		\begin{cases}
		|\sigma_y(t)|\to a\ \,\text{and}\ \,u(\sigma_y(t))\to 1\ \,\text{as}\ \,t\to t_y^-,\ \ &\\
|\sigma_y(t)|\to b \ \,\text{and}\ \,u(\sigma_y(t))\to 0\ \,\text{as}\ \, t\to t_y^+. &
\end{cases}
	\end{align}
Set $g:=u\circ \sigma_y\in C^1((t_y^-, t_y^+))$. Then $g$ is increasing since $(u\circ \sigma_y)'(t)=|\nabla u(\sigma_y(t))|^2=|\mathbf{v}(\sigma_y(t))|^2>0$ for all $t\in (t_y^-, t_y^+)$, and hence $g$ is an increasing homeomorphism from $(t_y^-, t_y^+)$ onto $(0, 1)$. Consider the function $f: (0, 1)\to \mathbb{R}$ defined by
\begin{equation}\label{3-12}
  f(\tau)=-\Delta u(\sigma_y(g^{-1}(\tau)))\ \ \ \text{for}\ \ \tau\in (0, 1).
\end{equation}
Then $f$ is of class $C^1((0, 1))$ by the chain rule. The equation $\Delta u+f(u)=0$ is now satisfied along the curve $\sigma_y((t_y^-, t_y^+))$. Let us check it in the whole set $\Omega_{a, b}$. Consider first any point $x\in \Omega_{a, b}$. Let $\xi_x$ be the solution of
	\begin{align*}
		\begin{cases}
			 \dot{\xi}_x(t) = \mathbf{v}(\xi_x(t)), &\\
             \xi_x(0)=x.\ \ &
\end{cases}
	\end{align*}
Then $\xi_x$ is defined in $\mathbb{R}$ and periodic. Furthermore, the streamline $\Phi_x:=\xi_x(\mathbb{R})$ is a $C^1$ Jordan curve surrounding the origin in $\Omega_{a, b}$ and meets the curve $\sigma_y((t_y^-, t_y^+))$ once; see Lemma 2.6 in \cite{HN}. Hence, there is $s\in (t_y^-, t_y^+)$ such that $\sigma_y(s)\in \Phi_x$. Note that both the stream function $u$ and the vorticity $\Delta u$ are constant along the streamline $\Phi_x$. It follows from \eqref{3-12} that
\begin{equation*}
  \Delta u(x)+f(u(x))=\Delta u(\sigma_y(s))+f(u(\sigma_y(s)))=\Delta u(\sigma_y(s))+f(g(s))=0.
\end{equation*}
Therefore, $\Delta u+f(u)=0$ in $\Omega_{a, b}$. By the continuity of $u$ in $\overline{\Omega_{a, b}}$ and \eqref{4-1}, we have
\begin{equation*}
 \max_{t\in \mathbb{R}}|\xi_x(t)|\to a  \ \text{as}\ |x|\to a\ \ \ \text{and}\ \ \   \min_{t\in \mathbb{R}}|\xi_x(t)|\to b\ \text{as}\ |x|\to b.
\end{equation*}
Since $\Delta u$ is uniformaly continuous in $\overline{\Omega_{a, b}}$ and constant along any streamline of the flow, we see that $\Delta u$ is constant on $C_a$ and constant on $C_b$. Call $d_1$ and $d_2$ the values of $\Delta u$ on $C_a$ and $C_b$, respectively. Set $f(1)=-d_1$ and $f(0)=-d_2$. Then we infers from \eqref{3-11} and \eqref{3-12} that $f:[0, 1]\to \mathbb{R}$ is continuous in $[0, 1]$ and that the equation $\Delta u+f(u)=0$ holds in $\overline{\Omega_{a, b}}$. The proof is thereby complete.
\end{proof}

\begin{example}
  Let $\mv$ be the steady flow in Example \ref{ex1}, i.e., $ \mv(x)=(|x|-a)\mathbf{e}_\theta(x)$. Then
  \begin{equation*}
\begin{split}
   u(x) & =(|x|-a)^2/2\ \ \ \text{(up to additive constants)}, \\
    J & = \{u(x)\mid x\in \overline{\Omega_{a, b}}\}=[0, (b-a)^2/2].
\end{split}
  \end{equation*}
  Let $f(s)  =-2+a/(a+\sqrt{2s}),\ s\in J$. One can easily verify that the equation $\Delta u+f(u)=0$ holds in $\overline{\Omega_{a, b}}$.
\end{example}

The following result concerns the symmetry of solutions to semi-linear elliptic equations with a continuous nonlinearity, which follows from Theorem 6.1 and Corollary 7.6 in \cite{Bro1}. Let $B_R(z)$ (resp. $Q_R(z)$) be the open (resp. closed) ball in $\mathbb{R}^N$, with radius $R>0$ centered at $z\in \mathbb{R}^N$.
\begin{proposition}[\cite{Bro1}]\label{p-2}
  Let $0<r<R<+\infty$ and let $g=g(r,t)$ be of class $ C([0, R]\times [0, +\infty))$ and be non-increasing in $r$. Let $u\in H^1_0(B_R(0))$ be a weak solution of the following problem
	\begin{align*}
		\begin{cases}
			-\Delta u=f(|x|, u),\ \ 0\le u \le 1&\text{in}\ \ B_R(0)\backslash Q_r(0),\\
             u\equiv 1,\ \ &\text{in}\  \  Q_r(0).
\end{cases}
	\end{align*}
In addition, suppose that the set $U:=\{x\in B_R(0): 0<u(x)<1\}$ is open and $u\in C^1(U)$. The $u$ is locally symmetric in the following sense:
\begin{equation*}
  U=A\cup S,\ \ A=\bigcup_{k\in K} A_k,
\end{equation*}
where
  \begin{itemize}
    \item [(1)]$K$ is a countable set,

    \smallskip
    \item [(2)]$A_k$ are disjoint open annuli $B_{R_k}(z_k)\backslash Q_{r_k}(z_k)$ with $R_k>r_k\ge 0$, $z_k\in \mathbb{R}^N$,

    \smallskip
    \item [(3)]$u$ is radially symmetric and decreasing in each domain $A_k$, more precisely,
    \begin{equation*}
      u=u(|x-z_k|),\ \ \ \frac{\partial u}{\partial \rho}<0\ \text{in}\ A_k\ \ \ (\rho=|x-z_k|)
    \end{equation*}
    and
    \begin{equation*}
      u(x)\ge u \big{|}_{\partial Q_{r_k}(z_k)},\ \ \ \forall\, x\in B_{r_k}(z_k),
    \end{equation*}

    \smallskip
    \item [(4)]$\nabla u=0$ on the set $S$.
  \end{itemize}
\end{proposition}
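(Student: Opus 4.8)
The plan is to deduce Proposition~\ref{p-2} from Brock's theory of continuous rearrangements, specifically Theorem~6.1 and Corollary~7.6 of \cite{Bro1}; what follows sketches why the hypotheses match and how the conclusion is extracted, rather than reconstructing the symmetrization machinery.

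The first point to record is that the whole configuration is rotationally invariant: the domain $B_R(0)\backslash Q_r(0)$, the obstacle $Q_r(0)$ on which $u\equiv 1$, the boundary datum $u=0$ on $\partial B_R(0)$, and — crucially — the hypothesis that the nonlinearity $f(|x|,\cdot)$ is non-increasing in $|x|$. This last monotonicity is exactly what makes the Dirichlet problem compatible with Steiner symmetrization about hyperplanes: writing $F(r,t)=\int_0^t f(r,s)\,ds$, so that $F(\cdot,t)$ is non-increasing, and $E(w)=\int_{B_R(0)}\big(\tfrac12|\nabla w|^2-F(|x|,w)\big)\,dx$, the function $u$ (extended by $1$ on $Q_r(0)$, so $u\in H^1_0(B_R(0))$ with $0\le u\le 1$) is a critical point of $E$ among the admissible competitors, while the assumed openness of $U=\{x\in B_R(0):0<u(x)<1\}$ and the regularity $u\in C^1(U)$ are precisely the inputs Brock's argument requires at the level of the solution set.

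The mechanism is then the following. For a hyperplane $H$ let $\sigma_H$ be the reflection across it and $u_H$ the polarization (two-point rearrangement) of $u$ with respect to $H$; polarization never increases the Dirichlet energy and, by the monotonicity of $F$ in $r$, is compatible with the potential term, so that $E(u_H)\le E(u)$. Combining these inequalities with the equation $-\Delta u=f(|x|,u)$ itself, Brock obtains, on each connected component of $U$ and for each direction, a dichotomy of maximum-principle type — either $u\ge u\circ\sigma_H$ or $u\le u\circ\sigma_H$ on the appropriate halfspace — which upgrades to genuine reflection symmetry on the open set where $\nabla u\ne 0$. Letting the hyperplane rotate and translate and patching the resulting symmetry axes produces the asserted decomposition $U=A\cup S$ with $A=\bigcup_{k\in K}A_k$, the $A_k=B_{R_k}(z_k)\backslash Q_{r_k}(z_k)$ being disjoint open annuli on which $u$ is radial and strictly radially decreasing, with $\nabla u\equiv 0$ on $S$, and with the boundary comparison $u\ge u|_{\partial Q_{r_k}(z_k)}$ on $B_{r_k}(z_k)$ inherited from the polarization inequalities; the countability of $K$ follows from separability of $U$.

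I expect the genuinely hard part to be exactly the passage from ``$u$ is comparable with all of its reflections'' to ``$U$ splits into annuli, concentric within each piece,'' carried out without the Lipschitz continuity on which the classical moving-plane method rests: one cannot appeal to Hopf's lemma or Serrin's corner lemma, so the dichotomy above must be produced by the more delicate continuous-symmetrization and unique-continuation arguments of \cite{Bro1}; the flat set $S$ must be separated off; and one must check that the local symmetry directions detected near different points of $U$ are mutually consistent, so that the centers $z_k$ and the disjointness/nesting of the $A_k$ are well defined. Since this is precisely the content packaged in Theorem~6.1 and Corollary~7.6 of \cite{Bro1}, in the write-up I would state those two results in the form needed here and quote them, after verifying that our $u$, extended by $1$ on $Q_r(0)$, meets their hypotheses.
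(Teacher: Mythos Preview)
Your proposal is in line with the paper: the paper does not prove Proposition~\ref{p-2} at all but simply records that it ``follows from Theorem~6.1 and Corollary~7.6 in \cite{Bro1},'' and you invoke exactly the same two results from Brock. Your additional sketch of the polarization/continuous-rearrangement mechanism is accurate in spirit and goes beyond what the paper provides, but since the paper treats this as a black-box citation there is nothing further to compare.
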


Note that $\partial A_k\cap U\subset S$, and hence $\nabla u(x)=0$ for any $x\in \partial A_k\cap U$.


	\phantom{s}
	\thispagestyle{empty}

\end{document}